\newtheorem{theorem}{Theorem}
\newtheorem{lemma}[theorem]{Lemma}
\newtheorem{proposition}[theorem]{Proposition}
\newtheorem{corollary}[theorem]{Corollary}
\begin{document}
\title[On cellular-compact and related spaces] {On
cellular-compact and related spaces} 
\author[A. Bella] {Angelo Bella}
\thanks{The research that led to the present paper was partially
supported by a grant of the group GNSAGA of INdAM}
\address{Dipartimento di Matematica e Informatica, viale A. Doria
6, 95125 Catania, Italy}
\email{bella@dmi.unict.it}
\subjclass[2010]{ 54A25, 54D20, 54D55.}
\keywords{cardinality bounds, cardinal invariants,
cellular-compact, cellular-Lindel\"of, weakly Lindel\"of, weakly
linearly  Lindel\"of, strongly discrete, free
sequence, point-countable base.}
\maketitle  
\begin{center}
Dedicated to A. V. Arhangel'ski\u\i\ on the
occasion of his 80th  birthday 
\end{center}

\begin{abstract} {Tkachuk and Wilson proved that a regular
first
countable cellular-compact space has cardinality not exceeding
the continuum. In the same paper they asked if this result
continues to hold for Hausdorff spaces. Xuan and Song considered
the same notion and asked if every cellular-compact space is
weakly Lindel\"of. We answer the last  question  for first
countable
spaces. As a by-product of this result, we present a   somewhat 
different proof of Tkachuk and Wilson theorem, valid  for the
wider class of   Urysohn spaces.  The 
result
actually holds for a class of spaces in between cellular-compact
and cellular-Lindel\"of.   We conclude with some comments on the
cardinality of a weakly linearly Lindel\"of space.} 
\end{abstract} 

\bigskip 
According to \cite{BS}, a space $X$ is cellular-Lindel\" of if
for any  disjoint collection of non-empty  open sets $\mathcal 
U$
there is a Lindel\"of subspace $L$ such that $L\cap U\ne
\emptyset $ for each $U\in \mathcal  U$. This notion has been
further
investigated in \cite{BS1}, \cite{XS1}, \cite{XS2}, \cite{XS3} 
and \cite{Tk}.
Among other 
things, in \cite{BS1} it was shown that  if $2^{<\mathfrak 
c}=\mathfrak 
c$, then the cardinality of a normal first countable 
cellular-Lindel\"of space does not exceed $\mathfrak  c$.  

Recently, Tkachuk
and Wilson have considered the narrower class of cellular-compact
spaces \cite{TW}. A space $X$ is cellular-compact provided that
for any disjoint family $\mathcal  U$ of non-empty open sets
there is
a compact subspace $K$ such that $K\cap U\ne \emptyset $ for each
$U\in \mathcal  U$.In \cite{TW} [Theorem 4.13]  the authors
proved
that the
cardinality of a regular first countable cellular-compact space
does not exceed the continuum and asked [Question 5.1] if this
result could be
true for every Hausdorff space.

In this short note we give a partial answer, by showing that this
happens for the class of Urysohn spaces.  The key point of our
proof is to show that any first countable Hausdorff cellular-
compact space is weakly Lindel\"of with respect to closed sets.
This gives a positive answer to  Question 5.18 in \cite{XS3}
within the class of first countable spaces.   
Then, the cardinality bound valid for Urysohn spaces can be
easily   deduced  from a theorem of Alas \cite{Al}. 
\smallskip 
Recall that a space $X$ is weakly Lindel\"of with respect to
closed sets provided that for any closed set $F$ and any
collection of open sets $\mathcal  U$ such that $F\subseteq
\bigcup \mathcal  U$ there is a countable subcollection $\mathcal
V\subseteq \mathcal 
U$ satisfying $F\subseteq \overline {\bigcup \mathcal  U}$. 
\smallskip 
For notations and undefined notions we refer to \cite{Eng}. A
space $X$ is Lindel\"of (or has the Lindel\"of property) if every
open cover of $X$ has a countable subcover.   
A space is Urysohn if distinct points can be separated by
disjoint closed
neighbourhoods. For a cardinal $\kappa $ and a space $X$ a set
$\{x_\alpha :\alpha <\kappa \}\subseteq X$ is a free sequence if
$\overline
{\{x_\beta:\beta<\alpha \}}\cap \overline {\{x_\beta:\alpha \le
\beta<\kappa \}}= \emptyset $ for each $\alpha <\kappa $. A set
$D\subseteq X$ is strongly discrete if it has a disjoint open
expansion, i. e.  there is a disjoint family of open sets
$\{U_d:d\in D\}$ such that $d\in U_d$ for every $d\in D$. 
\smallskip   
\begin {lemma}\label {Lemma1} If $X$ is a first countable
Hausdorff space,
then  every point has a disjoint local $\pi$-base. \end{lemma}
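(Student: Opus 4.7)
The plan is to split on whether $x$ is isolated and, in the non-isolated case, to build the disjoint local $\pi$-base by induction along a countable decreasing local base, using Hausdorffness at each step to peel off a new piece.

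If $x$ is isolated, I would just take $\{\{x\}\}$: it is trivially a disjoint family of non-empty open sets refining every neighborhood of $x$.

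Otherwise, fix a decreasing countable local base $\{U_n:n\in\omega\}$ at $x$. I would construct inductively two sequences: non-empty open sets $W_n$ that will form the $\pi$-base, and a decreasing sequence of open neighborhoods $V_n$ of $x$ acting as a ``safety buffer.'' The inductive hypothesis at stage $n$ is: $W_0,\dots,W_{n-1}$ are pairwise disjoint non-empty open sets with $W_i\subseteq U_i$, and $V_n$ is an open neighborhood of $x$ with $V_n\subseteq U_n$ and $V_n\cap W_i=\emptyset$ for every $i<n$. Start with $V_0=U_0$. At the inductive step, since $x$ is not isolated and $V_n$ is an open neighborhood of $x$, there is a point $y_n\in V_n\setminus\{x\}$. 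Hausdorffness furnishes disjoint open sets $A\ni x$ and $B\ni y_n$. Define
\[
W_n=B\cap V_n,\qquad V_{n+1}=A\cap V_n\cap U_{n+1}.
\]
Then $W_n$ is open, non-empty (it contains $y_n$), contained in $V_n\subseteq U_n$, and disjoint from each previous $W_i$ because $W_n\subseteq V_n$. Moreover $V_{n+1}\subseteq U_{n+1}$, $V_{n+1}\subseteq A$ is disjoint from $W_n\subseteq B$, and $V_{n+1}\subseteq V_n$ is disjoint from earlier $W_i$'s by the hypothesis, so the inductive invariants persist.

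The family $\{W_n:n\in\omega\}$ is pairwise disjoint by construction, and it is a local $\pi$-base at $x$ because any open $U\ni x$ contains some $U_n$, hence contains $W_n$. The main obstacle is precisely the point addressed by the auxiliary $V_n$'s: a naive Hausdorff separation only guarantees $x\notin W_n$, not that successive $W_n$'s avoid each other. Tracking a shrinking neighborhood of $x$ that already avoids every previously built $W_i$ is what forces the new $W_n\subseteq V_n$ to remain disjoint from all of them.
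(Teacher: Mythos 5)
Your proof is correct and follows essentially the same strategy as the paper's: split on whether $x$ is isolated, then induct along a decreasing local base, using Hausdorffness at each stage to carve out a new non-empty open piece inside $U_n$ while reserving a smaller neighborhood of $x$ for the next stage. The only cosmetic difference is that the paper realizes the pieces directly as differences $U_{n_k}\setminus\overline{U_{n_{k+1}}}$, letting the nesting of the base do the disjointness bookkeeping that your auxiliary sets $V_n$ handle explicitly.
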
  
\begin{proof} Let $x\in X$. If $x$ is isolated, then there is
nothing to prove.  So, assume it is not isolated  and  fix a
local base
$\{U_n:n<\omega\}$ at $x$. We may assume that $U_{n+1}\subseteq
U_n$ for each $n$.  Since $X$ is a Hausdorff space, there is some
$n_0 $ such that $V_0=U_0 \setminus \overline
{U_{n_0}}\ne \emptyset $. Next, we may choose $n_1>n_0$ such that
$V_1=U_{n_0}\setminus \overline {U_1}\ne\emptyset $,  $n_2>n_1$
such that $V_2=U_{n_1}\setminus \overline {U_{n_2}}\ne\emptyset $
and
so on. It is clear that  the collection $\{V_n:n<\omega\}$ is a
disjoint local $\pi$-base at $x$. \end{proof}
We will say that a space $X$ is strongly cellular-Lindel\"of 
provided that for any disjoint collection $\mathcal  U$ of
non-empty
open sets
there is a closed Lindel\"of subspace $Y$ such that $Y\cap U\ne
\emptyset $ for each $U\in \mathcal  U$. 

It is evident  that  every strongly cellular-Lindel\"of space is
cellular-Lindel\"of and every cellular-compact Hausdorff space is
strongly cellular-Lindel\"of.   The usual $\Psi(\mathcal  A)$
space
over an uncountable almost disjoint family $\mathcal  A$ on
$\omega$
is a cellular-Lindel\"of space which is not strongly cellular-
Lindel\"of. On the other hand,   any countable 
discrete  space is strongly cellular-Lindel\"of but not
cellular-compact.
\begin{lemma} \label {Lemma2}  Let $X$ be a Hausdorff first
countable
strongly cellular-Lindel\"of space. If $D$ is a strongly discrete
subset of $X$, then $\overline D$ has the Lindel\"of property.
\end{lemma}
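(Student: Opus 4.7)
The plan is to produce a closed Lindelöf subspace $Y$ of $X$ that already contains $D$. Once this is done, closedness of $Y$ gives $\overline{D}\subseteq Y$, and $\overline{D}$ becomes a closed subspace of a Lindelöf space, hence is itself Lindelöf. The nontrivial part is to arrange that $Y$ picks up every point of $D$, not merely points near $D$.

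Let $\{U_d:d\in D\}$ be a disjoint open expansion of $D$. For each non-isolated $d\in D$, apply Lemma \ref{Lemma1} to the (first countable Hausdorff) subspace $U_d$ to obtain a countable disjoint local $\pi$-base $\{V_n^d:n<\omega\}$ at $d$ with every $V_n^d\subseteq U_d$; for isolated $d$ simply set $V_0^d=\{d\}$. Because the expansion $\{U_d:d\in D\}$ is disjoint and each $V_n^d$ lies inside $U_d$, the combined family
\[
\mathcal{V}=\{V_n^d:d\in D,\ n<\omega\}
\]
is a disjoint collection of non-empty open subsets of $X$.

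Now apply the strong cellular-Lindelöf property of $X$ to $\mathcal{V}$: there is a closed Lindelöf subspace $Y\subseteq X$ meeting every element of $\mathcal{V}$. Fix $d\in D$. Any open neighborhood $W$ of $d$ contains some $V_n^d$ by the $\pi$-base property, and $V_n^d\cap Y\ne\emptyset$, so $W\cap Y\ne\emptyset$. Hence $d\in\overline{Y}=Y$. This proves $D\subseteq Y$, and therefore $\overline{D}\subseteq Y$, which completes the argument.

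The crucial step, and the one I expect to be the main obstacle, is the use of a $\pi$-base rather than merely the open expansion $\{U_d\}$: the $\pi$-base property is exactly what forces the point $d$ itself (and not just some witness inside $U_d$) into $Y$. This is where Lemma \ref{Lemma1} enters in an essential way; replacing the $\pi$-base by any coarser disjoint family inside $U_d$ would only show that $Y$ meets $U_d$, which is too weak to conclude $d\in Y$.
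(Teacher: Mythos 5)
Your argument is correct and is essentially the paper's own proof: both take the disjoint open expansion, refine it via Lemma \ref{Lemma1} to a disjoint family of local $\pi$-bases contained in the $U_d$'s, apply strong cellular-Lindel\"ofness to get a closed Lindel\"of $Y$ meeting every piece, and use the $\pi$-base property to force $d\in\overline{Y}=Y$, hence $\overline{D}\subseteq Y$. The only cosmetic differences are your explicit handling of isolated points and working inside the subspace $U_d$, neither of which changes the argument.
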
 
\begin{proof} Let  $\{U_d:d\in D\}$ be a
disjoint collection of open sets satisfying $d\in U_d$ for each
$d\in D$. By Lemma \ref{Lemma1}, for  every $d\in D$ we may fix a
disjoint
local $\pi$-base $\mathcal  E_d$ at $d$ such that $\bigcup
\mathcal  E_d\subseteq U_d$. The set $\mathcal 
E=\bigcup\{\mathcal  E_d:d\in
D\}$ is a disjoint collection of non-empty open sets and so there
exists a
closed Lindel\"of subspace $Y$ which intersects each member of
$\mathcal 
E$.   As every member of $\mathcal  E_d$ meets $Y$, we have $d\in
\overline Y=Y$. Therefore, $D\subseteq Y$ and  we deduce that
$\overline D\subseteq Y$ has the Lindel\"of property. \end{proof}
We are now ready to give a partial positive answer to Question
5.18  of  \cite{XS3}. Our proof is inspired by the argument used
in Theorem 4.13 of \cite{TW}. 
\begin{theorem} \label{theor1}  A strongly cellular-Lindel\"of
first countable  Hausdorff space is weakly Lindel\"of with
respect to closed subsets. 
\end{theorem}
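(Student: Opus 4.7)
The plan is to argue by contradiction, using Lemma~\ref{Lemma2} and a transfinite recursion of length $\omega_1$ to produce a strongly discrete sequence in $F$ whose closure is Lindel\"of; the final contradiction will come from the uncountable cofinality of $\omega_1$ combined with that Lindel\"ofness.

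Assume no countable $\mathcal{V}\subseteq\mathcal{U}$ satisfies $F\subseteq\overline{\bigcup\mathcal{V}}$. Recursively, for each $\alpha<\omega_1$, I construct $x_\alpha\in F$, a member $U_\alpha\in\mathcal{U}$ containing $x_\alpha$, and a countable family $\mathcal{V}_\alpha\subseteq\mathcal{U}$, maintaining the invariants that $D_\alpha=\{x_\beta:\beta<\alpha\}$ is strongly discrete with disjoint open expansion $V_\beta=U_\beta\setminus\overline{\bigcup\mathcal{V}_\beta}$ for $\beta<\alpha$, and that $\mathcal{V}_\alpha$ contains $\bigcup_{\beta<\alpha}\mathcal{V}_\beta$, the sets $\{U_\beta:\beta<\alpha\}$, and a countable subcover of $\overline{D_\alpha}$ drawn from $\mathcal{U}$ (such a subcover exists because $\overline{D_\alpha}$ is Lindel\"of by Lemma~\ref{Lemma2}). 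Since $\mathcal{V}_\alpha$ is countable, the working hypothesis supplies some $x_\alpha\in F\setminus\overline{\bigcup\mathcal{V}_\alpha}$, and I then take any $U_\alpha\in\mathcal{U}$ with $x_\alpha\in U_\alpha$. The new set $V_\alpha=U_\alpha\setminus\overline{\bigcup\mathcal{V}_\alpha}$ is disjoint from every earlier $V_\beta$, because $V_\beta\subseteq U_\beta\in\mathcal{V}_\alpha$ while $V_\alpha$ avoids $\overline{\bigcup\mathcal{V}_\alpha}$, so the invariant is preserved.

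After all $\omega_1$ stages, $D=\bigcup_{\alpha<\omega_1}D_\alpha$ is still strongly discrete, so by Lemma~\ref{Lemma2} the closure $\overline{D}$ is Lindel\"of. First countability of $X$ is used here: any $y\in\overline{D}$ is the limit of some sequence $(x_{\alpha_n})$ in $D$, and $\alpha^*=\sup_n\alpha_n<\omega_1$ by regularity of $\omega_1$, so $y\in\overline{D_{\alpha^*+1}}\subseteq\bigcup\mathcal{V}_{\alpha^*+1}$. Thus the increasing union $\bigcup_{\alpha<\omega_1}\mathcal{V}_\alpha$ covers $\overline{D}$; by Lindel\"ofness it admits a countable subcover, and by the uncountable cofinality of $\omega_1$ that subcover is contained in some single $\mathcal{V}_{\alpha_0}$. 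But then $x_{\alpha_0}\in D\subseteq\bigcup\mathcal{V}_{\alpha_0}$, contradicting the choice $x_{\alpha_0}\notin\overline{\bigcup\mathcal{V}_{\alpha_0}}$.

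The main obstacle is arranging the right inductive book-keeping so that $\{V_\beta\}$ stays disjoint while $\mathcal{V}_\alpha$ is kept rich enough---containing every $U_\beta$ and every earlier $\mathcal{V}_\beta$---for the disjointness to propagate through the recursion. First countability then enters only at the end, when sequential limits are used to place every point of $\overline{D}$ inside $\bigcup\mathcal{V}_{\alpha^*+1}$ for some countable $\alpha^*$, which is what enables the cofinality argument to produce a single $\alpha_0$ witnessing the contradiction.
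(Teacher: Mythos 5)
Your proof is correct, and while it uses the same recursion skeleton as the paper (pick $x_\alpha\in F$ outside the closure of the union of the countable families accumulated so far, and use Lemma~\ref{Lemma2} on the strongly discrete initial segments to cover $\overline{\{x_\beta:\beta<\alpha\}}$ by a countable $\mathcal{V}_\alpha\subseteq\mathcal{U}$), your endgame is genuinely different. The paper arranges conditions so that the full-length sequence $\{x_\alpha:\alpha<\omega_1\}$ is a \emph{free sequence} as well as strongly discrete, and then derives the contradiction from the external fact that a first countable Lindel\"of space contains no uncountable free sequence. You avoid free sequences entirely: by making the $\mathcal{V}_\alpha$ increasing and forcing each one to cover $\overline{D_\alpha}$, you use first countability (sequential closure plus regularity of $\omega_1$) to show $\overline{D}\subseteq\bigcup_{\alpha<\omega_1}\bigcup\mathcal{V}_\alpha$, extract a countable subcover by the Lindel\"ofness of $\overline{D}$ from Lemma~\ref{Lemma2}, trap it inside a single $\mathcal{V}_{\alpha_0}$ by cofinality, and contradict the choice of $x_{\alpha_0}$. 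Your bookkeeping for strong discreteness is also slightly different but sound: since $\mathcal{V}_\alpha$ contains every earlier $U_\beta$, the sets $V_\alpha=U_\alpha\setminus\overline{\bigcup\mathcal{V}_\alpha}$ are automatically pairwise disjoint and contain the corresponding $x_\alpha$. What your route buys is self-containedness --- the only nontrivial ingredient is Lemma~\ref{Lemma2}, with no appeal to the free-sequence bound for countably tight Lindel\"of spaces; what the paper's route buys is a shorter finish and a reusable structural fact (the sequence produced is free), which is the form in which the argument generalizes in the literature.
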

\begin{proof} Let $X$ be a strongly cellular-Lindel\"of first
countable Hausdorff space, $F$ a closed subset of $X$ and
$\mathcal 
U$  a collection of open subsets of $X$ satisfying $F\subseteq
\bigcup \mathcal  U$.
Let $x_0\in F$ and take any $W_0=U_0 \in \mathcal  U $ such that
$x_0\in W_0$. We proceed by
induction to define for each $\alpha <\omega_1$ points $x_\alpha
\in F$, open sets $W_\alpha \subseteq U_\alpha  \in \mathcal  U $
with
$x_\alpha \in W_\alpha $ and
countable
families  $\mathcal   V_\alpha\subseteq \mathcal  U$  in such a
way that
the
following conditions are satisfied.

\noindent a) $\overline {\{x_\beta:\beta<\alpha \}}\subseteq
\bigcup \mathcal  V_\alpha $;

\noindent b)  $W_\alpha \cap (\bigcup(\{W_\beta:\beta<\alpha
\}\cup  \bigcup \{\mathcal  V_\beta:\beta<\alpha \}))=\emptyset
$.

Fix $\alpha <\omega_1$ and assume to have already defined 
$\{x_\beta:\beta<\alpha \}$, $\{W_\beta \subseteq U_\beta
:\beta<\alpha \}$   and
$\{\mathcal  V_\beta:\beta<\alpha \}$.  

If $F \subseteq 
\overline {\bigcup (\{W_\beta:\beta<\alpha \}\cup
\bigcup\{\mathcal 
V_\beta:\beta<\alpha \})}$ we stop because   $\mathcal 
V=\{U_\beta:\beta<\alpha \}\cup \bigcup\{\mathcal 
V_\beta:\beta<\alpha \}$ is a countable subfamily of $\mathcal 
U$
satisfying $F\subseteq \overline {\bigcup\mathcal  V}$. If not,  
we may pick a point $x_\alpha \in  F$,  an open  set $W_\alpha$
and an element $U_\alpha  \in \mathcal  U
$ in such a way that $x_\alpha \in W_\alpha \subseteq U_\alpha $
and  
$W_\alpha \cap
(\bigcup(\{W_\beta:\beta<\alpha \}\cup \bigcup\{\mathcal 
V_\beta:\beta<\alpha \}))=\emptyset $. Finally, as the set     
$\{x_\beta:\beta<\alpha \}$ is strongly discrete, by Lemma
\ref{Lemma2}  
the set $\overline {\{x_\beta:\beta<\alpha \}}$ has the
Lindel\"of property.  Since $\overline {\{x_\beta:\beta<\alpha
\}}\subseteq F\subseteq \bigcup\mathcal  U$, there exists a
countable
family $\mathcal  V_\alpha
\subseteq \mathcal  U$ such that $\overline
{\{x_\beta:\beta<\alpha
\}}\subseteq \bigcup\mathcal  V_\alpha $.

Now, at the end of the induction, the resulting set $D=\{x_\alpha
:\alpha\in  \omega_1\}$ turns out to be a free sequence because
for
each $\alpha $ we have $\overline {\{x_\beta:\beta<\alpha
\}}\subseteq \bigcup \mathcal  V_\alpha $ and  $(\bigcup \mathcal
V_\alpha) \cap
\{x_\beta:\alpha \le \beta<\omega_1\}=\emptyset $.

The set $D$ is also strongly discrete and so by Lemma
\ref{Lemma2} its
closure should have the Lindel\"of property. But, a first
countable Lindel\"of space cannot contain  uncountable free
sequences and we reach a contradiction. This shows that the
 induction cannot be carried out for all $\alpha <\omega_1$. As
explained before,  when the induction stops we get a countable
subfamily $\mathcal  V\subseteq \mathcal  U$  satisfying
$F\subseteq
\overline {\bigcup\mathcal  V}$. \end{proof}
The $\Psi$-space over any MAD family of $\omega$ is a
first countable pseudocompact space which is weakly Lindel\"of
with respect to closed sets, but it   is not strongly
cellular-Lindel\"of.  

 Tkachuk constructed in \cite{Tk} a nice  
cellular-Lindel\"of space which is not weakly Lindel\"of.
In a Hausdorff P-space  every Lindel\"of subspace is closed. So,
any   Hausdorff cellular-Lindel\"of P-space is
strongly
cellular-Lindel\"of. Tkachuk's example is a P-space, so we
actually have a strongly cellular-Lindel\"of space which is not
weakly Lindel\"of.

 The
previous space is clearly not first countable. On the other hand,
in
\cite{BS1} it is shown that  under CH  any normal first countable
cellular-Lindel\"of space is weakly Lindel\"of. It is not clear 
whether  there exists a regular (or Hausdorff) first countable
cellular-Lindel\"of space which is not weakly Lindel\"of.

Alas in \cite{Al} proved that  a first countable Urysohn  space
which is weakly Lindel\"of with respect to closed sets has
cardinality at most the continuum. So, we immediately get:
\begin{corollary} If $X$ is a first countable Urysohn
strongly cellular-Lindel\"of 
( in particular cellular-compact) space, then $|X|\le \mathfrak 
c$.
\end{corollary}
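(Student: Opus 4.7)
The plan is to deduce the corollary as a direct chaining of Theorem \ref{theor1} with the cited theorem of Alas \cite{Al}. First, let $X$ be a first countable Urysohn strongly cellular-Lindel\"of space. Since Urysohn implies Hausdorff, $X$ satisfies all the hypotheses of Theorem \ref{theor1}, so I would immediately apply that theorem to conclude that $X$ is weakly Lindel\"of with respect to closed sets.

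Next, I would invoke Alas's theorem, which states that any first countable Urysohn space with the weak Lindel\"of property for closed sets has cardinality at most $\mathfrak{c}$. Combining this with the conclusion of the previous step gives $|X| \le \mathfrak{c}$, which is the desired bound.

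Finally, to justify the parenthetical ``in particular cellular-compact,'' I would recall the observation made earlier in the paper that every cellular-compact Hausdorff space is strongly cellular-Lindel\"of. Since a Urysohn space is Hausdorff, this reduction shows the cellular-compact case is subsumed by the strongly cellular-Lindel\"of case, and no separate argument is needed.

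There is no genuine obstacle here: the corollary is a two-line assembly of results already at hand. The only small point to watch is simply to make sure the hypotheses of Theorem \ref{theor1} are correctly matched (first countable, Hausdorff, strongly cellular-Lindel\"of) and that the invocation of Alas's result is stated in the form applicable to closed sets rather than the weaker ``weakly Lindel\"of'' version.
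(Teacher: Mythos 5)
Your proposal is correct and is exactly the paper's argument: Theorem \ref{theor1} gives that $X$ is weakly Lindel\"of with respect to closed sets, and Alas's theorem then bounds the cardinality by $\mathfrak c$, with the cellular-compact case reduced to the strongly cellular-Lindel\"of case via the earlier observation. No differences to report.
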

It is still an open problem  whether Alas's result is true for
Hausdorff spaces.  In \cite{Ar}[Corollary 22] Arhangel'ski\u\i\
showed 
that this  happens by
using the stronger notion of strict  quasi Lindel\"ofness. Such
result makes sense only for first countable spaces, but  a more
general one  has been recently established in \cite{BS2}. A space
$X$
is strictly quasi Lindel\"of provided that for any closed set
$F$ and any collection of open sets $\mathcal 
U=\bigcup\{\mathcal 
U_n:n<\omega\}$ such that $F\subseteq \bigcup \mathcal  U$ there
are
countable subcollections $\mathcal  V_n\subseteq \mathcal  U_n$
for each
$n<\omega$ satisfying $F\subseteq \bigcup \{\overline
{\bigcup\mathcal  V_n}: n<\omega\}$. Unfortunately,  
here we did not manage  to  prove that a first countable
Hausdorff cellular-compact space is strictly quasi Lindel\"of.

Another partial answer to the question  asked by Tkachuk and
Wilson can be obtained by strengthening  the hypothesis of first
countability.
\begin{theorem} \label{theor2} If $X$ is a cellular-compact space
with a point-countable base, then $|X|\le \mathfrak c$.
\end{theorem}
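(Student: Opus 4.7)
The plan is to derive $|X|\le\mathfrak c$ from two classical ingredients: the Hajnal--Juh\'asz inequality $|X|\le 2^{c(X)\chi(X)}$ for Hausdorff spaces, together with Mi\v{s}\v{c}enko's theorem that every compact Hausdorff space with a point-countable base is metrizable. Since a point-countable base is automatically a countable local base at every point, the bound $\chi(X)\le\omega$ is immediate, so the whole argument reduces to showing that $X$ is CCC.

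For the cellularity bound I would argue by contradiction. Suppose there is an uncountable disjoint family $\mathcal U$ of non-empty open subsets of $X$. Cellular-compactness supplies a compact subspace $K\subseteq X$ meeting every member of $\mathcal U$, and then $\{U\cap K:U\in\mathcal U\}$ is an uncountable pairwise disjoint family of non-empty relatively open subsets of $K$. The subspace $K$ inherits the point-countable base of $X$ and is compact Hausdorff, so by Mi\v{s}\v{c}enko's metrization theorem $K$ is separable metrizable, hence CCC. This contradicts the existence of such a family inside $K$, so $c(X)\le\omega$. Combined with $\chi(X)\le\omega$, the Hajnal--Juh\'asz inequality then yields $|X|\le 2^{c(X)\chi(X)}=2^{\omega}=\mathfrak c$.

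The only real obstacle is the cellularity step; once CCC is available, the cardinality estimate is an immediate application of a textbook inequality. A subtle but essential point is that the metrizability of the auxiliary compact set $K$ relies on Hausdorffness, which is the standing hypothesis of the paper. If one wanted to avoid invoking the full Mi\v{s}\v{c}enko metrization theorem, one could instead argue CCC directly from Mi\v{s}\v{c}enko's combinatorial lemma on the countability of minimal open covers drawn from a point-countable family, or equivalently from the weaker fact that a compact Hausdorff space with a point-countable base is separable; either route delivers the same conclusion.
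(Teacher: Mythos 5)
Your proposal is correct and is essentially the paper's own argument: the paper likewise notes that every compact subset is metrizable (hence separable) because of the point-countable base, deduces that $X$ is CCC, and then applies the Hajnal--Juh\'asz inequality $|X|\le 2^{c(X)\chi(X)}$. You merely spell out the cellularity step and the $\chi(X)\le\omega$ observation in more detail.
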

\begin{proof} Since $X$ has a point-countable base, every compact
subset is metrizable and hence separable.  This in turn implies
that every collection of pairwise disjoint open sets is
countable. Now, the result follows  from the Hajnal-Juh\'asz
inequality $|X|\le 2^{c(X)\chi(X)}$. \end{proof}

As  a  Hausdorff cellular-Lindel\"of P-space is
strongly
cellular-Lindel\"of, with minor modifications we may prove:
\begin{theorem} A cellular-Lindel\"of Urysohn P-space of
character  at most  $\omega_1$ has cardinality not exceeding
$2^{\omega_1}$. \end{theorem}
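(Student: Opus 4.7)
The plan is to imitate the proof of Theorem~\ref{theor1} verbatim, replacing ``countable'' throughout by ``of size at most $\omega_1$'' and ``first countable'' by ``character at most $\omega_1$''. As remarked just above the statement, the cellular-Lindel\"of Hausdorff P-space hypothesis upgrades to \emph{strongly} cellular-Lindel\"of, which is what the proof actually exploits.

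The first step is an $\omega_1$-analog of Lemma~\ref{Lemma1}: every non-isolated point of a Hausdorff P-space of character $\le\omega_1$ admits a disjoint local $\pi$-base of size at most $\omega_1$. The P-space property lets us pick a decreasing local base $\{U_\alpha:\alpha<\omega_1\}$ at $x$ (countable intersections of opens are open). The Hausdorff assumption forces $\bigcap_\alpha\overline{U_\alpha}=\{x\}$, and the same bookkeeping as in Lemma~\ref{Lemma1}, now carried out by transfinite recursion through $\omega_1$, yields indices $\beta_\alpha$ with $V_\alpha=U_{\beta_\alpha}\setminus\overline{U_{\beta_{\alpha+1}}}\ne\emptyset$ (at limit stages one takes $\beta_\alpha=\sup_{\gamma<\alpha}\beta_\gamma<\omega_1$). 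Lemma~\ref{Lemma2} then transfers \emph{verbatim}: for any strongly discrete $D\subseteq X$, amalgamating the $\pi$-bases $\mathcal E_d$ into a single disjoint family and applying strong cellular-Lindel\"ofness produces a closed Lindel\"of $Y$ containing $D$, so $\overline D\subseteq Y$ is itself Lindel\"of.

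With those two lemmas in hand, I would re-run the induction of Theorem~\ref{theor1} along $\omega_2$ in place of $\omega_1$. At each stage $\alpha<\omega_2$ the partial set $\{x_\beta:\beta<\alpha\}$ is strongly discrete, so by the modified Lemma~\ref{Lemma2} its closure is Lindel\"of and hence is covered by a \emph{countable} subfamily $\mathcal V_\alpha\subseteq\mathcal U$. Suppose toward a contradiction that the induction runs through all of $\omega_2$; then $D=\{x_\alpha:\alpha<\omega_2\}$ is a strongly discrete free sequence whose closure $\overline D$ is Lindel\"of and of character $\le\omega_1$. Working inside $\overline D$, Lindel\"ofness applied to the decreasing closed sets $F_\alpha=\overline{\{x_\beta:\beta\ge\alpha\}}$ forces $\bigcap_\alpha F_\alpha\ne\emptyset$; pick $x$ there and a local base $\{U_\gamma:\gamma<\omega_1\}$ at $x$, set $\beta_\gamma=\min\{\beta:x_\beta\in U_\gamma\}$ and $\alpha^*=\sup_\gamma\beta_\gamma$. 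Since $\mathrm{cf}(\omega_2)>\omega_1$, $\alpha^*<\omega_2$, and then $x$ sits simultaneously in $\overline{\{x_\beta:\beta\le\alpha^*\}}$ and in $F_{\alpha^*+1}$, contradicting the freeness of the sequence.

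Consequently the induction halts at some $\alpha<\omega_2$, producing a subfamily $\mathcal V\subseteq\mathcal U$ of cardinality at most $\omega_1$ with $F\subseteq\overline{\bigcup\mathcal V}$; in other words $X$ is weakly $\omega_1$-Lindel\"of with respect to closed sets. The bound $|X|\le 2^{\omega_1}$ then follows from the natural $\omega_1$-version of the theorem of Alas used in the preceding corollary, namely that a Urysohn space of character $\le\omega_1$ which is weakly $\omega_1$-Lindel\"of with respect to closed sets has cardinality at most $2^{\omega_1}$; its proof is a routine transposition of the $\omega$-case via Arhangel'ski\u\i 's closing-off technique. The one non-cosmetic step I expect to need care is precisely this verification that Alas's estimate lifts cleanly from $\omega$ to $\omega_1$; everything else is adjustment of the arguments already present in Lemmas~\ref{Lemma1}, \ref{Lemma2} and Theorem~\ref{theor1}.
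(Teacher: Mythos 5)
Your proposal is correct and is precisely the argument the paper intends: the paper gives no written proof of this theorem beyond the remark that a Hausdorff cellular-Lindel\"of P-space is strongly cellular-Lindel\"of and that ``minor modifications'' of the earlier lemmas suffice, and your $\omega_1$-for-$\omega$ transposition (disjoint local $\pi$-bases of size $\omega_1$ obtained via the P-property, an induction of length $\omega_2$ killed by $\mathrm{cf}(\omega_2)>\omega_1$, and Alas's inequality $|X|\le 2^{\chi(X)\cdot wL_c(X)}$ for Urysohn spaces) is exactly that adaptation. The step you flag as needing care, the $\omega_1$-version of Alas's bound, is indeed the only non-cosmetic point, and it is already available in the cited paper of Alas in full cardinal-function generality.
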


\smallskip 
According to \cite{jtw}, a space  is weakly linearly Lindel\"of
if
every family of non-empty open sets has a complete accumulation
point. A collection of sets $\mathcal  U$ in the space $X$ has an
accumulation point $p$ if every neighbourhood of $p$ meets
$|\mathcal 
U|$-many elements of $\mathcal  U$.

A space $X$ is almost linearly Lindel\"of \cite{Tk} if every open
cover has a subcollection of countable cofinality whose union is
dense in $X$

Corollary 3.3 in \cite{Tk} shows that every almost linearly
Lindel\"of space is weakly linearly Lindel\"of  and Corollary
3.19 in \cite{Tk}  exhibits under CH (but the argument actually
works by assuming $\mathfrak  c<\aleph_\omega$) a
cellular-Lindel\"of
space
which is not almost linearly Lindel\"of.
\begin {lemma} \label{Lemma6}
 Let $X$ be a   Hausdorff sequential space. If
$A\subseteq X$, then $|\overline A|\le |A|^\omega$. \end{lemma}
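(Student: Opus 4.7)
The plan is to peel off $\overline{A}$ by transfinite iteration of the sequential closure operator. I would set $A_0=A$, define $A_{\alpha+1}$ to be the set of all limits of sequences lying in $A_\alpha$ (which automatically contains $A_\alpha$ via constant sequences), and put $A_\lambda=\bigcup_{\alpha<\lambda}A_\alpha$ at limit ordinals. Because $X$ is sequential, the standard fact that $\overline{A}=A_{\omega_1}$ reduces the task to bounding $|A_\alpha|$ uniformly for $\alpha<\omega_1$.

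The main ingredient is the successor-step estimate $|A_{\alpha+1}|\le |A_\alpha|^\omega$, and this is where the Hausdorff assumption enters: each convergent sequence has at most one limit, so the map sending a convergent sequence in $A_\alpha$ to its limit is a well-defined function from a subset of $A_\alpha^{\,\omega}$ onto $A_{\alpha+1}$. Setting $\kappa=|A|^\omega$, the cases $|A|\le 1$ are trivial since $X$ is $T_1$, so we may assume $\kappa\ge \mathfrak{c}$, giving both $\kappa^\omega=\kappa$ and $\aleph_1\le\kappa$. A routine transfinite induction then yields $|A_\alpha|\le \kappa$ for every $\alpha<\omega_1$: at successor stages one uses $|A_{\alpha+1}|\le |A_\alpha|^\omega\le \kappa^\omega=\kappa$, while at a limit $\lambda<\omega_1$ one has $|A_\lambda|\le \aleph_0\cdot\kappa=\kappa$. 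Taking the union over the whole $\omega_1$-long chain produces $|\overline{A}|\le \aleph_1\cdot\kappa=\kappa$.

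There is no real obstacle; the only care required is the cardinal arithmetic and the reminder that in a sequential (rather than Fr\'echet--Urysohn) space a single application of the sequential closure need not give $\overline{A}$, so the transfinite iteration up to $\omega_1$ is essential. The Hausdorff hypothesis is used only once, but it is crucial: without unique limits, a single sequence could contribute many points and the bound $|A_{\alpha+1}|\le |A_\alpha|^\omega$ would fail.
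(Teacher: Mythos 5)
Your argument is correct. The paper in fact states this lemma without proof, treating it as a standard fact, and your proof is exactly the standard one: iterate the sequential closure $\omega_1$ times, use Hausdorffness (unique limits) to get the surjection from convergent sequences onto the next stage, and close with routine cardinal arithmetic using $(|A|^\omega)^\omega=|A|^\omega$. Nothing is missing.
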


\begin{lemma} \label{Lemma7} \cite{Tk} [Theorem 3.11]   Let $X$
be a 
weakly linearly Lindel\" of
space. If
$\mathcal   W$ is an open cover of $X$ of regular uncountable
cardinality,  then there exists a
subfamily
$\mathcal   W'\subseteq \mathcal   W$ such that $\bigcup \mathcal
W'$ is dense in
$X$ and $|\mathcal   W'|< |\mathcal W| $. \end{lemma}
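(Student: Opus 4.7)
The natural approach is a diagonal argument by contradiction. Set $\kappa=|\mathcal{W}|$ and suppose, toward contradiction, that no subfamily $\mathcal{W}'\subseteq\mathcal{W}$ with $|\mathcal{W}'|<\kappa$ has dense union in $X$. Enumerate $\mathcal{W}=\{W_\alpha:\alpha<\kappa\}$. Since each initial segment $\{W_\beta:\beta<\alpha\}$ has cardinality $|\alpha|<\kappa$, by the assumption I can recursively pick a non-empty open set $V_\alpha$ disjoint from $\bigcup_{\beta<\alpha}W_\beta$, yielding a $\kappa$-indexed family $\mathcal{V}=\{V_\alpha:\alpha<\kappa\}$ of non-empty open sets.

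Before applying the hypothesis to $\mathcal{V}$, I want to be sure that $\mathcal{V}$, viewed as a set of open sets, has cardinality $\kappa$. If instead $|\mathcal{V}|<\kappa$, then since $\kappa$ is regular, some single open set $V$ would satisfy $V=V_\alpha$ for unboundedly many $\alpha<\kappa$; taking the union of the avoidance conditions over those $\alpha$ forces $V$ to be disjoint from $\bigcup\mathcal{W}=X$, contradicting $V\ne\emptyset$. Hence $|\mathcal{V}|=\kappa$ as a set.

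Now I apply the weakly linearly Lindel\"of hypothesis to $\mathcal{V}$ to obtain a complete accumulation point $p\in X$: every neighbourhood of $p$ meets $\kappa$-many members of $\mathcal{V}$. Since $\mathcal{W}$ covers $X$, choose $\gamma<\kappa$ with $p\in W_\gamma$. By construction $V_\alpha\cap W_\gamma=\emptyset$ whenever $\alpha>\gamma$, so the open neighbourhood $W_\gamma$ of $p$ can meet at most those $V_\alpha$ indexed by $\alpha\le\gamma$, a collection of cardinality at most $|\gamma|+1<\kappa$. This contradicts the accumulation property of $p$ and finishes the argument; the desired $\mathcal{W}'$ is produced by negating the initial supposition.

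The only real obstacle, modest as it is, is the cardinality bookkeeping of the middle paragraph: one has to confirm that the recursively selected $V_\alpha$ are numerous enough as \emph{distinct} sets for the weakly linearly Lindel\"of property to bite, since the definition phrases the accumulation condition in terms of $|\mathcal{V}|$. Once that is handled, the contradiction is essentially forced by combining the avoidance condition on the $V_\alpha$ with the fact that $\mathcal{W}$ is a cover.
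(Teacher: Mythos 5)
Your argument is correct, and it handles the one genuinely delicate point (that the recursively chosen $V_\alpha$ form $\kappa$ \emph{distinct} open sets, so that the complete accumulation point condition applies at cardinality $\kappa$) properly via regularity of $\kappa$. The paper states this lemma as a citation of Theorem 3.11 of Tkachuk's paper and gives no proof, but your diagonalization is essentially the standard argument used there, so there is nothing to flag.
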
 
\begin{theorem} \label{t2}  Let  $X$
 be a
normal sequential space satisfying
$\chi(X)\le
\mathfrak   c$. If either 
a) [$2^{<\mathfrak  c}=\mathfrak  c$]\  $X$ is weakly
linearly
Lindel\"of 
or b) [$\mathfrak  c<\aleph_\omega$]\ $X$ is almost
linearly
Lindel\"of,
then $|X|\le \mathfrak  c$. \end{theorem}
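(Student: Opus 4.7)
By Lemma \ref{Lemma6} and the identity $\mathfrak c^\omega=\mathfrak c$, any dense subset $D\subseteq X$ of cardinality at most $\mathfrak c$ yields $|X|=|\overline D|\le\mathfrak c$, so the goal is to show $d(X)\le\mathfrak c$. Observe that $\mathfrak c$ is regular in either case: under $2^{<\mathfrak c}=\mathfrak c$, standard cardinal arithmetic (K\"onig) forces $\mathrm{cf}(\mathfrak c)=\mathfrak c$; under $\mathfrak c<\aleph_\omega$, $\mathfrak c=\aleph_n$ is automatically regular. In particular $\mathrm{cf}(\mathfrak c)>\omega$.

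The plan is a closing-off construction modeled on the argument in \cite{BS1} for normal first countable cellular-Lindel\"of spaces under $2^{<\mathfrak c}=\mathfrak c$, with first countability traded for sequentiality combined with Lemma \ref{Lemma6}, and cellular-Lindel\"ofness traded for Lemma \ref{Lemma7} in case (a) or almost linear Lindel\"ofness in case (b). Using $\chi(X)\le\mathfrak c$, fix for each $x\in X$ a local base $\mathcal V_x=\{V(x,\xi):\xi<\mathfrak c\}$. Recursively build an increasing chain $\{D_\alpha:\alpha<\mathfrak c\}$ of closed subsets of $X$, each of size $\le\mathfrak c$, such that at every stage $\alpha$ the following witness property holds: for each subfamily $\mathcal U$ of $\{V(x,\xi):x\in\bigcup_{\beta<\alpha}D_\beta,\,\xi<\mathfrak c\}$ of admissible cardinality (strictly below $\mathfrak c$ in case (a); countable in case (b)) whose union is not dense in $X$, a witness point from $X\setminus\overline{\bigcup\mathcal U}$ has been placed in $D_\alpha$. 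The arithmetic $\mathfrak c^{<\mathfrak c}=2^{<\mathfrak c}=\mathfrak c$ in case (a) or $\mathfrak c^\omega=\mathfrak c$ in case (b) bounds the number of such subfamilies per stage by $\mathfrak c$, and closure under sequential limits via Lemma \ref{Lemma6} preserves $|D_\alpha|\le\mathfrak c$. Set $D=\bigcup_{\alpha<\mathfrak c}D_\alpha$; then $|D|\le\mathfrak c$, and $D$ is closed because $X$ is sequential and $\mathrm{cf}(\mathfrak c)>\omega$.

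Suppose $D\ne X$. Pick $p\in X\setminus D$; by normality obtain disjoint open $W\ni p$ and $U\supseteq D$. For each $d\in D$ choose $V(d,\xi_d)\in\mathcal V_d$ with $V(d,\xi_d)\subseteq U$, and set $\mathcal U_0=\{V(d,\xi_d):d\in D\}$, so that $\bigcup\mathcal U_0\subseteq U$ and hence $p\notin\overline{\bigcup\mathcal U_0}$. Extending $\mathcal U_0$ by $X\setminus\overline{\bigcup\mathcal U_0}$ gives a bona fide open cover of $X$ to which the Lindel\"of-type hypothesis applies: iterated application of Lemma \ref{Lemma7} in case (a), or almost linear Lindel\"ofness in case (b), extracts an admissible $\mathcal U\subseteq\mathcal U_0$ with $\bigcup\mathcal U_0\subseteq\overline{\bigcup\mathcal U}$, whence $D\subseteq\overline{\bigcup\mathcal U}$ and yet $p\notin\overline{\bigcup\mathcal U}$. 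Since $\mathcal U$ is coded by points of some $D_\alpha$ with $\alpha<\mathfrak c$, the construction inserted a witness in $D\setminus\overline{\bigcup\mathcal U}$, contradicting $D\subseteq\overline{\bigcup\mathcal U}$. Thus $D=X$ and $|X|\le\mathfrak c$.

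The main obstacle is the combinatorial bookkeeping: verifying at each of the $\mathfrak c$ stages that only $\mathfrak c$ admissible subfamilies need to be witnessed and that the witness additions, together with sequential closure via Lemma \ref{Lemma6}, do not inflate $|D_\alpha|$ past $\mathfrak c$. A secondary subtlety in case (a) is the iterated use of Lemma \ref{Lemma7}, which only descends strictly from a regular uncountable cardinality to a smaller one, so reducing an arbitrary open cover to admissible size requires chaining several reductions; in case (b), the hypothesis $\mathfrak c<\aleph_\omega$ caps the length of this descent at a finite number of steps.
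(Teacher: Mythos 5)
Your construction is the same closing-off argument the paper uses (an increasing $\mathfrak c$-chain of closed sets of size $\le\mathfrak c$, witnesses against small non-dense subfamilies of the chosen local bases, Lemma \ref{Lemma6} to control closures, and regularity of $\mathfrak c$ to localize the final subfamily in some stage). But there is one step that fails as written: the family $\mathcal U_0\cup\{X\setminus\overline{\bigcup\mathcal U_0}\}$ is \emph{not} in general an open cover of $X$, since it omits the boundary $\overline{\bigcup\mathcal U_0}\setminus\bigcup\mathcal U_0$. Lemma \ref{Lemma7} and almost linear Lindel\"ofness both require a genuine open cover, so you cannot apply either to this family. The repair is a second application of normality, and this is precisely where the normality hypothesis does its real work in the paper: choose an open $W'$ with $D\subseteq W'\subseteq\overline{W'}\subseteq\bigcup\mathcal U_0$ and adjoin $X\setminus\overline{W'}$ instead; now every point outside $\bigcup\mathcal U_0$ lies outside $\overline{W'}$, so you do get a cover, and $D\subseteq W'$ still forces $D\subseteq\overline{\bigcup\mathcal U}$ after discarding the extra set from the extracted subfamily.

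Your ``secondary subtlety'' about iterating Lemma \ref{Lemma7} is a misreading of the endgame, and it is worth clearing up because iteration would not even be possible: the lemma returns a subfamily whose union is merely dense, not a subcover, so it cannot be applied a second time (and the intermediate cardinality could be singular). Fortunately no iteration is needed. In case (a) the admissible size is anything $<\mathfrak c$, the cover has size $\le\mathfrak c$, and $2^{<\mathfrak c}=\mathfrak c$ makes $\mathfrak c$ regular, so a single application of Lemma \ref{Lemma7} (or nothing at all, if the cover already has size $<\mathfrak c$) lands you in admissible territory. In case (b) there is no descent: almost linear Lindel\"ofness applied once yields a subfamily of countable cofinality, and since every cardinal $\le\mathfrak c<\aleph_\omega$ of countable cofinality is at most $\omega$, that subfamily is already countable. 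With these two corrections your argument coincides with the paper's proof.
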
 
\begin{proof}
 For  each $p\in X$ let $\mathcal    U_p$ be a
local base at $p$ such that $|\mathcal    U_p|\le \mathfrak   
c$. We
will
construct by transfinite recursion an increasing 
sequence $\{H_\alpha :\alpha <\mathfrak    c\}$ of
closed subsets of $X$ satisfying:

\noindent  $1_ \alpha $) $|H_\alpha |\le \mathfrak    c$;

\noindent $2_{\alpha} $) if $X\setminus \overline {\bigcup
\mathcal   V}\ne \emptyset$  for some $\mathcal    V\subseteq 
\bigcup\{\mathcal  
U_p:p\in
H_\alpha \}$ with $|\mathcal  V|{<\mathfrak    c}$ (case a) or
$|\mathcal  V|\le
\omega$ (case b)), then $H_{\alpha +1}\setminus
\overline
{\bigcup\mathcal    V}\ne \emptyset $.

Put $H_0=\{x_0\}$ for some $x_0\in X$ and let $\phi :\mathcal  
P(X)\rightarrow X$ be any choice function    such that
$\phi(\emptyset)=x_0$.
Assume to have already defined  the subsequence
$\{H_\beta:\beta<\alpha \}$. If $ \alpha $ is a limit ordinal,
then put $H_\alpha =\overline{\bigcup\{H_\beta:\beta<\alpha \}}$
(Lemma \ref{Lemma6} ensures $1_\alpha $).
If $\alpha =\gamma+1$, then  let $H_\alpha$ be the closure of the
set $
H_\gamma\cup\{\phi(X\setminus \overline{\bigcup \mathcal  
V}):\mathcal  
V\subseteq \bigcup\{\mathcal     U_p :p\in H_\gamma\}$ with
$|\mathcal  V| 
{<\mathfrak    c}$ (case a)) or $|\mathcal  V|\le \omega$ (case
b)) 
$\}$. A  counting argument (which takes into account
$2^{<\mathfrak 
c}=\mathfrak  c$ in case a))   and again Lemma \ref{Lemma6} 
show that
$H_\alpha $ satisfies $1_\alpha $.

 Then, put $H=\bigcup\{H_\alpha :\alpha <\mathfrak    c
\}$. It is clear that
$|H|\le \mathfrak    c
$. So, the proof will be completed by showing that
$X=H$.  Suppose the contrary and pick a  non-empty open set $O$
such that $\overline O\subseteq X\setminus H$.  For each $p\in H$
take an element $U_p\in \mathcal    U_p$  satisfying $U_p\cap
O=\emptyset $. As the space is normal, we may also pick an open
set $W$ such that $H\subseteq W$ and $\overline W\subseteq
\bigcup\{U_p: p\in H\}$. The collection $\mathcal    W=\{U_p:p\in
H\}\cup \{X\setminus \overline W\}$ is an open cover of $X$ of
cardinality not exceeding $\mathfrak    c$.

\noindent Case a). If $|\mathcal  W|=\mathfrak  c$, then  by
Lemma  \ref{Lemma7} 
there
exists a
subfamily $\mathcal    W'\subseteq \mathcal    W$ such that
$\bigcup
\mathcal    W'$
is dense in $X$ and $|\mathcal    W'|<  \mathfrak  
c$. If $|\mathcal  W|<\mathfrak  c$, then  let $\mathcal 
W'=\mathcal  W$.

\noindent Case b).  By  the definition of almost linear
Lindel\"ofness, 
there
exists a
subfamily $\mathcal    W'\subseteq \mathcal    W$ of countable
cofinality
such that $\bigcup
\mathcal    W'$
is dense in $X$. Since  $|\mathcal    W'|\le |\mathcal    W|\le
\mathfrak  
c\le \aleph_\omega$, we must have  $|\mathcal  W'|\le \omega$.

In both cases,    put
$\mathcal  
V=\mathcal    W'\setminus \{X\setminus \overline W\}$ and notice
that
$H\subseteq \overline {\bigcup \mathcal    V}$.

Moreover, in both cases we have $|\mathcal  V|<cf(\mathfrak  c)$.
Thus,  
there is an ordinal $\alpha <\mathfrak    c$ such that
$\mathcal    V\subseteq \{U_p:p\in H_\alpha \}$. Since $H_{\alpha
+1}\subseteq H\subseteq \overline {\bigcup\mathcal    V}$, we
reach
a
contradiction with condition $2_ \alpha $. This completes the
proof. \end{proof}
As mentioned in \cite{Tk}, it is still an open  problem to prove
Theorem \ref{t2}  in ZFC. As in Theorem \ref{theor2},  
we observe that a positive solution can
be
easily obtained by strengthening  the first countability
assumption.

\begin {theorem} If $X$ is a normal weakly linearly
Lindel\"of space with a point-countable base, then $|X|\le
\mathfrak 
c$. \end{theorem}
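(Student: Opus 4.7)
The plan is to parallel the argument used for Theorem~\ref{theor2}: establish that $c(X)\le\omega$ and then apply the Hajnal--Juh\'asz inequality $|X|\le 2^{c(X)\chi(X)}$. Since a point-countable base automatically forces $\chi(X)\le\omega$, this will give $|X|\le 2^{\omega}=\mathfrak c$. The substantive task is thus to verify $c(X)\le\omega$ from weakly linear Lindel\"ofness together with the point-countable base.

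Toward a contradiction assume $\{U_\alpha:\alpha<\omega_1\}$ is a pairwise disjoint family of non-empty open sets in $X$; after shrinking each $U_\alpha$ to a basic open subset we may assume $U_\alpha\in\mathcal B$, where $\mathcal B$ is a fixed point-countable base. By weakly linear Lindel\"ofness the family has a complete accumulation point $p$, so every open neighbourhood of $p$ meets $\omega_1$ many $U_\alpha$. Using that $X$ is Hausdorff and first countable, fix a decreasing countable local base $\{B_n:n<\omega\}\subseteq\mathcal B_p$ at $p$ with $\bigcap_n B_n=\{p\}$, and set $T_n=\{\alpha<\omega_1:U_\alpha\cap B_n\ne\emptyset\}$; these form a decreasing sequence of subsets of $\omega_1$, each of cardinality $\omega_1$. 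Because $\mathcal B$ is point-countable, $p$ lies in at most countably many members of $\mathcal B$, hence in at most countably many $U_\alpha$; discarding those we may assume $p\notin U_\alpha$ for every $\alpha$.

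From here I would run a dichotomy on $\bigcap_n T_n$. If $|\bigcap_n T_n|=\omega_1$, then uncountably many $U_\alpha$ have $p$ on their boundary, and a careful iterative refinement, using weakly linear Lindel\"ofness together with the countability of $\mathcal B_p$, should force termination after countably many steps and so contradict the accumulation property at $p$. If instead $|\bigcap_n T_n|\le\omega$, some difference $T_n\setminus T_{n+1}$ has cardinality $\omega_1$, yielding an $\omega_1$-sized disjoint subfamily whose members meet the open annulus $B_n\setminus\overline{B_{n+1}}$; reapplying weakly linear Lindel\"ofness to this subfamily produces a new complete accumulation point strictly further from $p$ in the local base hierarchy, and iterating inside the countable local base at $p$ produces the contradiction.

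The main obstacle is closing up this dichotomy: in each branch one naively reduces to a smaller instance of the same configuration, so extracting a decisive contradiction rather than merely shrinking the problem requires carefully using both normality (to separate the relevant closed sets at each stage) and the point-countability of $\mathcal B$ at every newly produced complete accumulation point, in order to guarantee that the iteration cannot be continued beyond countably many stages.
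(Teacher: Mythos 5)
Your proposal has a genuine gap, and it is located exactly where you place your hopes. You reduce the theorem to the claim $c(X)\le\omega$, intending to finish with Hajnal--Juh\'asz as in Theorem~\ref{theor2}; but unlike the cellular-compact case (where compact sets with a point-countable base are metrizable, hence separable, which kills uncountable cellular families), nothing in your sketch actually establishes that a normal weakly linearly Lindel\"of space with a point-countable base is ccc, and the dichotomy you describe does not close. In branch one, the configuration you reach --- a point $p$ with a countable decreasing local base $\{B_n\}$ lying in the closure of $\omega_1$ pairwise disjoint open sets --- is not by itself contradictory: take $\omega_1\times\omega$ discrete with an extra point $p$ whose neighbourhoods are $\{p\}\cup(\omega_1\times[n,\omega))$; this is first countable, has a point-countable base, and $p$ is in the closure of the $\omega_1$ disjoint sets $\{\alpha\}\times\omega$. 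So the ``careful iterative refinement'' must draw on weak linear Lindel\"ofness and normality in some essential, unspecified way, and no termination argument is given. In branch two, reapplying weak linear Lindel\"ofness to an $\omega_1$-sized subfamily just produces another complete accumulation point and another instance of the same problem; there is no well-founded quantity that decreases, since the new accumulation point need not bear any relation to the local base at $p$. As written, both branches ``shrink the problem'' without ever reaching a contradiction, which you yourself acknowledge.

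The paper's proof avoids cellularity altogether. It quotes Proposition 3.10 of \cite{Tk}, which says that a \emph{normal} weakly linearly Lindel\"of space has extent $e(X)\le\mathfrak c$ (this is where normality is genuinely used, via a separation argument applied to a closed discrete set); hence the weak extent satisfies $we(X)\le e(X)\le\mathfrak c$. A point-countable base gives a point-countable separating open cover, so $psw(X)=\omega$, and Hodel's inequality $|X|\le we(X)^{psw(X)}$ from \cite{Ho} yields $|X|\le\mathfrak c^{\omega}=\mathfrak c$. If you want to salvage your line of attack, you would first need to decide whether $c(X)\le\omega$ is even true under these hypotheses; the paper's choice of invariants ($e$, $we$, $psw$ rather than $c$ and $\chi$) is precisely what sidesteps that question.
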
 
\begin{proof} By Proposition 3.10 in \cite{Tk},  the space $X$
has extent $e(X)\le \mathfrak  c$. Now, it suffices to use the
inequality $|X|\le we(X)^{psw(X)}$, established by Hodel in
\cite{Ho} (see also \cite{BB} for a short direct proof).  Since
in our case $we(X)\le e(X)\le \mathfrak  c$ and $psw(X)=\omega$,
we
are done. \end{proof}

Recall that the weak extent $we(X)$ of a space $X$ is the
smallest cardinal $\kappa $  such that for any open cover
$\mathcal U$ there is a set $A\subseteq X$ satisfying $|A|\le
\kappa $ and $X=\bigcup\{U: U\in \mathcal U, U\cap A\ne \emptyset
\}$.

We conclude this paper with some comments on a recent work of
Xuan and Song  \cite{cinesi}.

Denote by $\tau(X)$ the topology of the space $X$.
A g-function for  $X$ is a map $g:\omega \times X\to
\tau(X)$ such that $x\in g(n,x)$ and $g(n+1,x)\subseteq g(n,x)$
for each $x\in X$ and $n<\omega$. A g-function $g:\omega\times
X\to \tau(X)$ is said to be symmetric if for any $n<\omega$ and
$x,y\in X$ $x\in g(n,y)$ if and only if $y\in g(n,x)$. 
Furthermore,  $g^2(n,x)=\bigcup\{g(n,y):y\in g(n,x)\}$.

In \cite{cinesi} the authors  gave several results on weakly
linearly Lindel\"of spaces. We wish to make a comment on one of
them.     

\begin{proposition} \label {o}  \cite{cinesi} [Theorem 3.14]
Suppose $X$ is a
Baire space with a symmetric function  $g$  such that:

\begin{enumerate}
\item $\bigcap\{g^2(n,x):n<\omega\}=\{x\}$ for each $x\in
X$;
\item for each $n<\omega$ there is a set $F_n\subseteq X$
such that $|F_n|\le \omega_1$ and $X=\bigcup\{g(n,x):x\in F_n\}$.
\end{enumerate}

If every family of non-empty open subsets of $X$ of cardinality
$\omega_1$ has a complete accumulation point, then $|X|\le
\mathfrak c$. \end{proposition}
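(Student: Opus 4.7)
The plan is to construct an injection $\Phi: X \hookrightarrow \prod_{n<\omega} F_n$ using the covers from condition (2) as a source of ``approximating centers'', and then to invoke the identity $\omega_1^\omega = \mathfrak c$ (valid in ZFC, from $\aleph_1 \le 2^{\aleph_0}$) to turn the codomain bound into the desired cardinality inequality.

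The first step is, for each $x \in X$ and each $n<\omega$, to use condition (2) to pick some $y_n(x) \in F_n$ with $x \in g(n, y_n(x))$. By symmetry of $g$ this is equivalent to $y_n(x) \in g(n, x)$, so $g(n, y_n(x))$ is one of the sets in the union defining $g^2(n, x)$, and therefore $g(n, y_n(x)) \subseteq g^2(n, x)$.

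Next, define $\Phi: X \to \prod_{n<\omega} F_n$ by $\Phi(x) = (y_n(x))_{n<\omega}$ and verify injectivity. Suppose $\Phi(x) = \Phi(x')$ with common value $(y_n)_{n<\omega}$. Then for every $n$ we have $x' \in g(n, y_n(x')) = g(n, y_n) \subseteq g^2(n, x)$, so $x' \in \bigcap_{n<\omega} g^2(n, x) = \{x\}$ by condition (1), forcing $x' = x$. Hence $|X| \le \prod_n |F_n| \le \omega_1^\omega$, and since $\omega_1 \le \mathfrak c$ we have $\omega_1^\omega \le \mathfrak c^\omega = (2^\omega)^\omega = 2^\omega = \mathfrak c$.

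The only real obstacle is keeping straight the interplay between symmetry and $g^2$: symmetry converts ``$x$ lies in an open set centered at some $y \in F_n$'' into ``$y \in g(n,x)$'', which is then precisely the premise required by the $g^2$-definition to get $g(n,y) \subseteq g^2(n,x)$. Everything else is bookkeeping. Worth flagging is that this coding argument appears to use only conditions (1), (2), and the symmetry of $g$, so the Baire property and the $\omega_1$-complete-accumulation hypothesis of the statement seem unnecessary for the bound $|X|\le \mathfrak c$; I would only revisit them if the above simple reduction turned out to miss something.
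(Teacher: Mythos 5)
Your proof is correct and is essentially the same coding argument the paper uses: it proves the stronger theorem stated right after Proposition \ref{o}, injecting $X$ into $\prod_n F_n$ via first-found centers and using symmetry to pass from $x\in g(n,y)$ to $g(n,y)\subseteq g^2(n,x)$. Your closing observation that the Baire property and the complete-accumulation-point hypothesis are superfluous is precisely the remark the paper itself makes before restating the result with $|A_n|\le\mathfrak c$.
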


We observe that in the above result neither the Baire property
nor that fragment of  weak linear Lindel\"ofness are needed.

\begin{theorem} 
Suppose that $X$ is a space with a symmetric  g-function 
$g:\omega\times X\to \tau(X)$ satisfying:

\begin{enumerate}
\item $\bigcap \{g^2(n,x):n<\omega\}=\{x\}$ for each $x\in
X$; 
\item for each $n<\omega$ there is a set  $A_n\subseteq
X$ such that $X=\bigcup\{g(n,a):a\in A_n\}$ and $|A_n|\le
\mathfrak c$.  
\end{enumerate}

Then $|X|\le \mathfrak c$.
\end{theorem}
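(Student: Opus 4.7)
The plan is to encode each point of $X$ by a sequence of ``centers'' drawn from the small covering sets $A_n$, and show that this encoding is injective.

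Concretely, let $A=\bigcup_{n<\omega}A_n$, so $|A|\le \mathfrak c$. For each $x\in X$ and each $n<\omega$, use hypothesis (2) to pick some $a_n(x)\in A_n$ with $x\in g(n,a_n(x))$. This defines a map $f\colon X\to A^\omega$ by $f(x)=(a_n(x))_{n<\omega}$. Since $|A^\omega|\le \mathfrak c^\omega=\mathfrak c$, it suffices to show that $f$ is injective.

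To verify injectivity, suppose $f(x)=f(y)$ and write $a_n:=a_n(x)=a_n(y)$. Then $y\in g(n,a_n)$, while the symmetry of $g$ together with $x\in g(n,a_n)$ gives $a_n\in g(n,x)$. By the definition of $g^2$, these two facts say precisely that $y\in g^2(n,x)$. Since this holds for every $n$, hypothesis (1) forces $y\in\bigcap_n g^2(n,x)=\{x\}$, so $y=x$.

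There is no real obstacle here; the argument only uses the symmetric g-function structure and the counting of $A^\omega$, and in particular bypasses both the Baire-space assumption and the weak linear Lindel\"of fragment used in Proposition \ref{o}. The one mild subtlety worth flagging in the write-up is the role of symmetry: it is exactly what converts ``$x$ is covered by a $g(n,\cdot)$-neighbourhood of $a_n$'' into ``$a_n$ lies in $g(n,x)$'', which is the step that lets the common value $a_n$ witness membership of $y$ in $g^2(n,x)$.
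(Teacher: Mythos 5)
Your proof is correct and follows essentially the same route as the paper: both define the map $x\mapsto (a_n(x))_{n<\omega}$ into ${}^\omega A$ using hypothesis (2) and prove injectivity via symmetry and hypothesis (1), the only cosmetic difference being that you argue injectivity directly while the paper argues the contrapositive. No issues.
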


\begin{proof}
 Let $A=\bigcup\{A_n:n<\omega\} $. Note that $|A|\leq \mathfrak
c$. 
Fix  a well-ordering $\prec$ on $X$.
We may define a map $f:X\to {}^\omega A $ in such a way that for
$x\in X$ and  $n<\omega$ we have that $f(x)(n) = a$, where $a$ is
the $\prec$-first  element in $
A_n$ satisfying $x\in g(n,a)$. To complete the
proof we will show that this mapping is injective.

So fix $x\not= y$. Then we may find  $n<\omega$   such that
$ y\notin g^2(n,x)$. Since we are assuming that $g$ is symmetric,
the latter formula is equivalent to $g(n,x)\cap g(n,y)=\emptyset
$. 
Now let $p=f(x)(n)$. Then $x\in g(n,p) $, and then also
$p\in g(n,x) $.   But, this means that
$p\notin g(n,y)$  and therefore
$y\notin g(n,p)$. This  implies that
$p\not=f(y)(n)$. This shows that $f$ is injective and
we are done. \end{proof}

Recall that a space $X$ has a $G_\delta$-diagonal of rank 2
provided that there is a sequence of open covers $\{\mathcal
U_n:n<\omega\}$ satisfying $\bigcap St^2(x,\mathcal U_n)=\{x\}$
for each $x\in  X$. From Proposition \ref{o} the authors of
\cite{cinesi} derived the following:     
\begin {corollary} \cite{cinesi} [Corollary 3.15] 
If $X$ is a weakly linearly Lindel\"of Baire space with a
$G_\delta$-diagonal of rank 2 such that $we(X)\le \omega_1$,  
then $|X|\le \mathfrak c$. \end{corollary}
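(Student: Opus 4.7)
The plan is to derive the corollary as a direct application of the preceding Theorem, observing (as the author explicitly stresses) that the Baire hypothesis and the weak linear Lindel\"ofness will not be used at all: only the $G_\delta$-diagonal of rank 2 together with $we(X)\le \omega_1\le \mathfrak c$ are needed. Thus the task reduces to producing, from these data, a symmetric g-function $g:\omega\times X\to \tau(X)$ that satisfies conditions (1) and (2) in the statement of the preceding Theorem.

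First I would fix a sequence $\{\mathcal U_n:n<\omega\}$ of open covers witnessing the $G_\delta$-diagonal of rank 2, i.e.\ $\bigcap_n St^2(x,\mathcal U_n)=\{x\}$ for every $x\in X$. Replacing each $\mathcal U_n$ by the cover consisting of all finite intersections $U_0\cap\cdots\cap U_n$ with $U_k\in \mathcal U_k$, I may assume that $\mathcal U_{n+1}$ refines $\mathcal U_n$; this preserves the rank-$2$ diagonal condition, since refinement shrinks stars and hence iterated stars. I would then set $g(n,x)=St(x,\mathcal U_n)$. Symmetry is immediate, since $y\in St(x,\mathcal U_n)$ is equivalent to $\exists U\in \mathcal U_n\,(x,y\in U)$, a symmetric condition on $x,y$; and the nesting of the covers yields $g(n+1,x)\subseteq g(n,x)$.

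For condition (1), the key observation is that $g^2(n,x)\subseteq St^2(x,\mathcal U_n)$: if $z\in g(n,y)$ for some $y\in g(n,x)$, pick $V,U\in \mathcal U_n$ with $z,y\in V$ and $x,y\in U$; then $V$ meets $St(x,\mathcal U_n)$ at $y$, so $z\in V\subseteq St^2(x,\mathcal U_n)$. Therefore $\bigcap_n g^2(n,x)\subseteq \bigcap_n St^2(x,\mathcal U_n)=\{x\}$, and clearly $x\in g^2(n,x)$. For condition (2), the weak extent hypothesis produces, for each $n$, a set $A_n\subseteq X$ with $|A_n|\le \omega_1\le \mathfrak c$ such that every point of $X$ lies in some member of $\mathcal U_n$ meeting $A_n$; any such member is contained in $St(a,\mathcal U_n)=g(n,a)$ for a suitable $a\in A_n$, so $X=\bigcup\{g(n,a):a\in A_n\}$. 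An application of the preceding Theorem now gives $|X|\le \mathfrak c$.

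No genuine obstacle should arise: the entire point of the author's strengthening is that the symmetric g-function language makes this derivation a routine translation. The one non-trivial line is the inclusion $g^2(n,x)\subseteq St^2(x,\mathcal U_n)$, which converts the rank-2 diagonal into condition (1); everything else is bookkeeping.
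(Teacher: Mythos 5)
Your proof is correct and is exactly the intended derivation: the paper states this corollary only to point out that it follows from the preceding theorem (equivalently, from Proposition 4.3 of the cited Basile--Bella--Ridderbos paper) once the rank-2 diagonal covers are converted into a symmetric g-function via stars, with the weak extent hypothesis supplying the sets $A_n$; the Baire and weak linear Lindel\"of hypotheses are indeed never used. The one step worth checking carefully --- that passing to common refinements preserves the rank-2 diagonal condition and that $g^2(n,x)\subseteq St^2(x,\mathcal U_n)$ --- is handled correctly in your argument.
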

As before, this result has a better formulation, already
established in \cite{bbr}.
\begin{corollary} \cite{bbr} [Proposition 4.3]  If $X$ is a space
with a $G_\delta$-diagonal of rank 2 and $we(X)\le \mathfrak c$,
then $|X|\le \mathfrak c$. \end{corollary}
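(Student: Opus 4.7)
The plan is to deduce the corollary from the preceding theorem by building a symmetric g-function out of a witnessing sequence for the rank~$2$ $G_\delta$-diagonal, and extracting the weak-extent input as hypothesis~(2).

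Let $\{\mathcal U_n:n<\omega\}$ be a sequence of open covers witnessing $\bigcap_n St^2(x,\mathcal U_n)=\{x\}$ for every $x\in X$. After replacing each $\mathcal U_n$ by the common refinement of $\mathcal U_0,\dots,\mathcal U_n$, I may assume $\mathcal U_{n+1}$ refines $\mathcal U_n$; this preserves the rank~$2$ condition, since refining a cover only shrinks the iterated star at each point. Define $g(n,x)=St(x,\mathcal U_n)$. This is a g-function (open, containing $x$, decreasing in $n$ by the refinement), and it is symmetric because $y\in g(n,x)$ unfolds to the symmetric relation ``$x$ and $y$ lie in a common member of $\mathcal U_n$''.

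The main verification is the identity $g^2(n,x)=St^2(x,\mathcal U_n)$, obtained by a straightforward unwinding of the definitions; the $\supseteq$ inclusion uses an intermediate witness point in $U\cap St(x,\mathcal U_n)$ for each $U\in\mathcal U_n$ that meets $St(x,\mathcal U_n)$. With this, hypothesis~(1) of the previous theorem is exactly the rank~$2$ $G_\delta$-diagonal assumption. For hypothesis~(2), I apply $we(X)\le\mathfrak c$ to the open cover $\mathcal U_n$: there is a set $A_n\subseteq X$ with $|A_n|\le\mathfrak c$ such that every $x\in X$ belongs to some $U\in\mathcal U_n$ meeting $A_n$; choosing $a\in U\cap A_n$ gives $x\in St(a,\mathcal U_n)=g(n,a)$, so $X=\bigcup\{g(n,a):a\in A_n\}$. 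The previous theorem then yields $|X|\le\mathfrak c$.

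I do not foresee a genuine obstacle; the only delicate points are the identity $g^2=St^2$ and the remark that common refinements preserve the rank~$2$ diagonal property --- both routine but worth writing out with care, since the whole reduction hinges on them.
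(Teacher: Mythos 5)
Your proposal is correct and is exactly the intended derivation: the paper states this corollary as the improved form of Corollary 3.15 of Xuan--Song, citing \cite{bbr} rather than writing out a proof, but it is positioned precisely as a consequence of the preceding g-function theorem via $g(n,x)=St(x,\mathcal U_n)$ (after passing to common refinements) and the identity $g^2(n,x)=St^2(x,\mathcal U_n)$, with $we(X)\le\mathfrak c$ supplying the sets $A_n$. All the verifications you flag as delicate do go through as you describe.
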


 Proposition 4.5  in \cite{bbr}  establishes  that  the
cardinality of a weakly Lindel\"of Baire space with a $G_\delta$-
diagonal of rank 2  does not exceed $\mathfrak c$, while it is
still an open problem if a similar result continues to hold
without the Baire assumption. In this direction, Questions 5.1
and
5.2 in \cite{cinesi} appear very interesting. Indeed,   these
questions ask if the cardinality of a weakly
linearly Lindel\"of  (Baire) space with a $G_\delta$-diagonal of
rank 2 is bounded by $\mathfrak c$.  This happens for sure within
the
class of normal spaces.
\begin{theorem} If $X$ is a weakly linearly Lindel\"of normal
space with a $G_\delta$-diagonal of rank 2, then $|X|\le
\mathfrak c$. \end{theorem}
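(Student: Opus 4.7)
The plan is to invoke the corollary from \cite{bbr} recalled above (a space with a $G_\delta$-diagonal of rank 2 and $we(X)\le\mathfrak c$ has cardinality at most $\mathfrak c$) and reduce the whole theorem to proving $we(X)\le\mathfrak c$.

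As a preliminary, I would establish the rank-2 analogue of Lemma \ref{Lemma6}: if $X$ has a $G_\delta$-diagonal of rank 2 witnessed by open covers $\{\mathcal U_n:n<\omega\}$, then $|\overline A|\le|A|^\omega$ for every $A\subseteq X$. The argument parallels the sequential case: for each $x\in\overline A$ and each $n<\omega$ pick $a_n(x)\in A\cap St(x,\mathcal U_n)$; if two points of $\overline A$ yield the same sequence $(a_n)_{n<\omega}$, then each lies in $St^2$ of the other at every level, and the rank-2 condition collapses them to a single point.

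To obtain $we(X)\le\mathfrak c$, recall first that Proposition 3.10 of \cite{Tk} gives $e(X)\le\mathfrak c$. Fix an open cover $\mathcal U$ of $X$ and greedily select $x_\alpha\in X\setminus\bigcup_{\beta<\alpha}U_\beta$ together with $U_\alpha\in\mathcal U$ such that $x_\alpha\in U_\alpha$, stopping when the $U_\alpha$'s cover $X$. Each $U_\alpha$ isolates $x_\alpha$ inside $D=\{x_\alpha\}$, so $D$ is discrete in $X$ and $A=D$ is a witness of size $|D|$ for $we$ against $\mathcal U$. I would then bound $|D|\le\mathfrak c$ by using normality to produce a pairwise disjoint open expansion around a cofinal subfamily of $D$, applying weak linear Lindel\"ofness via Lemma \ref{Lemma7} to that disjoint family, and invoking the closure estimate to show that any discrete subset of cardinality $>\mathfrak c$ would contain a closed discrete subset of the same size---contradicting $e(X)\le\mathfrak c$.

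The main obstacle is this final upgrade of a large discrete set to a large closed discrete set from only normality, weak linear Lindel\"ofness, and the $G_\delta$-diagonal of rank 2. The closure estimate constrains how many accumulation points a thinned-out piece of $D$ can acquire, normality supplies the disjoint open expansion, and Lemma \ref{Lemma7} absorbs the resulting disjoint open family; together they should isolate the required closed discrete subset. Once $we(X)\le\mathfrak c$ is secured, the cited corollary from \cite{bbr} delivers $|X|\le\mathfrak c$.
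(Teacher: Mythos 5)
There is a genuine gap, and it sits exactly where you flag ``the main obstacle.'' Two concrete problems. First, your greedy selection does not produce a discrete set: $U_\alpha$ excludes only the \emph{later} points $x_\beta$, $\beta>\alpha$ (since $x_\beta\notin\bigcup_{\gamma<\beta}U_\gamma$), but it may well contain many earlier ones, so $D$ is merely right-separated and the claim ``$U_\alpha$ isolates $x_\alpha$ inside $D$'' is false. (If you want $we(X)\le e(X)$, the correct device is a subset $D$ maximal with respect to the property that no member of $\mathcal U$ contains two points of $D$; such a $D$ is closed discrete and, by maximality, the members of $\mathcal U$ meeting $D$ cover $X$.) Second, the decisive step is never carried out, and the one concrete tool you invoke for it --- ``normality supplies a pairwise disjoint open expansion'' of a discrete set --- is not a consequence of normality; that is collectionwise Hausdorffness/normality, which you do not have. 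The sentence ``together they should isolate the required closed discrete subset'' is a hope, not an argument, so $we(X)\le\mathfrak c$ is not established.

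The idea you are missing, and which the paper's proof is built on, is that the rank-$2$ diagonal itself manufactures the disjoint open expansion, with no collectionwise hypothesis: assuming $|X|>\mathfrak c$, colour $[X]^2$ by $\{x,y\}\mapsto\min\{n: y\notin St^2(x,\mathcal U_n)\}$ (condition (1) of rank $2$ guarantees this is defined) and apply Erd\H{o}s--Rado to get an uncountable $H$ homogeneous in colour $n_0$; then $\{St(x,\mathcal U_{n_0}):x\in H\}$ is automatically pairwise disjoint and $H$ is closed discrete. Normality enters only to shrink: choose open $V\supseteq H$ with $\overline V\subseteq\bigcup\{St(x,\mathcal U_{n_0}):x\in H\}$, and then the uncountable disjoint family $\{V\cap St(x,\mathcal U_{n_0}):x\in H\}$ has no complete accumulation point, contradicting weak linear Lindel\"ofness. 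This is a direct, self-contained argument that never passes through $we(X)$ or the closure estimate $|\overline A|\le|A|^\omega$ (your preliminary lemma is correct but unused in any completed step). Your intended reduction to Proposition 4.3 of \cite{bbr} could in principle be repaired --- $we(X)\le e(X)$ by the maximality argument above, combined with the extent bound from Proposition 3.10 of \cite{Tk} --- but that is a different proof from the one you wrote, and as submitted the argument does not close.
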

\begin{proof} To obtain  a contradiction, assume  that
$|X|>\mathfrak c$ and
let $\{\mathcal U_n:n<\omega\}$ be a sequence of open covers
witnessing a $G_\delta$-diagonal of rank 2. For each $n<\omega$
let $F_n=\{\{x,y\}:x,y\in X, y\notin St^2(x,\mathcal U_n)\}$. We
have $[X]^2=\bigcup\{F_n:n<\omega\}$ and so by Erdos-Rado theorem
there is some $n_0\in \omega$ and an uncountable set $H\subseteq
X$  such that $[H]^2\subseteq F_{n_0}$. The collection
$\{St(x,\mathcal U_{n_0}):x\in H\}$ consists of pairwise
disjoint sets and $H$ is closed in $X$. By the normality of $X$
there is an open set $V$ satisfying $H\subseteq V$ and $\overline
V\subseteq \bigcup \{St(x,\mathcal U_{n_0}): x\in H\}$. But then,
the uncountable family of open sets $\{V\cap St(x,\mathcal
U_{n_0}):x\in H\}$ has no complete accumulation point, 
contradicting the weak linear Lindel\"ofness of $X$. \end{proof}

\bigskip

\end{document}